\documentclass[12pt, a4paper, twoside]{article}
\usepackage{amsmath,amsfonts,amssymb,amsthm,amscd}
\usepackage{epsfig}
\numberwithin{equation}{section}
\newtheorem{theorem}{Theorem}[section]

\newtheorem{lemma}[theorem]{Lemma}
\newtheorem{claim}[theorem]{Claim}

\newtheorem{corollary}[theorem]{Corollary}
\newtheorem{observation}[theorem]{Observation}

\newtheorem*{remark*}{Remark}

\newcommand{\Fcal}{\mathcal{F}}
\newcommand{\Lcal}{\mathcal{L}}
\newcommand{\Pcal}{\mathcal{P}}
\newcommand{\Rcal}{\mathcal{R}}
\newcommand{\Ucal}{\mathcal{U}}

\topmargin=-13 mm
\textheight=9.3in
\oddsidemargin 0mm   
\evensidemargin 0mm
\textwidth  6.2in

\def\inst#1{$^{#1}$}
\newcommand{\theforbmatrix}{\bigl( \begin{smallmatrix}1 & 1 \\ 1& 1\end{smallmatrix}\bigr)}

\begin{document}

\title{Maximum size of reverse-free sets of permutations
\thanks{
This research was supported by the Czech Science Foundation under the contract no. 201/09/H057 
and by grant SVV-2012-265313 (Discrete Methods and Algorithms).
} 
} 

\author{Josef Cibulka\inst{1}
} 

\date{}

\maketitle

\begin{center}
{\footnotesize
\inst{1}
Department of Applied Mathematics and Institute for Theoretical Computer Science, \\
Charles University, Faculty of Mathematics and Physics, \\
Malostransk\'e n\'am.~25, 118~00 Praha 1, Czech Republic; \\
\texttt{cibulka@kam.mff.cuni.cz}
}
\end{center}  

\begin{abstract}
Two words have a reverse if they have the same pair of distinct letters on the same pair of positions, but in reversed order.
A set of words no two of which have a reverse is said to be reverse-free.
Let $F(n,k)$ be the maximum size of a reverse-free set of words from $[n]^k$ where no letter repeats within a word.
We show the following lower and upper bounds in the case $n \ge k$: $F(n,k) \in n^k k^{-k/2 + O\left(k /\log k \right)}$.
As a consequence of the lower bound, a set of $n$-permutations each two having a reverse has size at most $n^{n/2 + O \left(n/\log n \right)}$.
\end{abstract}


\section{Introduction}

Let $[n]$ be the set of integers from $1$ to $n$.
A \emph{word} $w$ of length $k$ over the alphabet $A$ is a sequence $w_1, \dots, w_k$
of elements from $A$.
The set of all words of length $k$ over $[n]$ is $[n]^k$.
A word is \emph{repetition-free} if it contains at most one occurrence of each symbol.
The set of all repetition-free words of length $k$ over $[n]$ is $[n]_{(k)}$.
Notice that when $n = k$, the set $[n]_{(n)}$ is the set $S_n$ of permutations on $n$ elements .
A \emph{code} $\Fcal$ of length $k$ is a subset of $[n]^k$.
The \emph{size} of $\Fcal$ is the number of words in $\Fcal$.
Codes are usually defined to be sets of words that in some sense significantly differ from each other
in order to be distinguishable when transmitted over a noisy channel.
We study reverse-free codes introduced by F{\"u}redi, Kantor, Monti and Sinaimeri~\cite{FKMS}.
Two words $w$ and $x$ have a \emph{reverse} if for some pair $(i,j)$ of positions, 
we have $w_i \neq w_j$, $w_i = x_j$ and $w_j = x_i$.
If $w$ and $x$ do not have a reverse, they are \emph{reverse-free}.
A code is \emph{reverse-free} if its words are pairwise reverse-free.
Let $\overline{F}(n,k)$ be the size of the largest reverse-free code over $[n]$ of length $k$.
Let $F(n,k)$ be the size of the largest reverse-free code over $[n]$ of length $k$ containing only
repetition-free words.
Let
\[
f(k)
\stackrel{def}{=} \lim_{n \rightarrow \infty} \frac{F(n,k)}{k! \binom{n}{k}}.
\]
The limit exists for every $k \ge 1$~\cite{FKMS}.
We will use the following equivalent definitions of the limit:
\[
f(k)
= \lim_{n \rightarrow \infty} \frac{F(n,k)}{n^k}
= \lim_{n \rightarrow \infty} \frac{\overline{F}(n,k)}{n^k}.
\]
The first equality follows from the fact that $\lim_{n \rightarrow \infty} \binom{n}{k} k! n^{-k} = 1$.
The second equality is a consequence of the observation that for every fixed $k$,
we have $F(n,k) \le \overline{F}(n,k) \le F(n,k) + O(n^{k-1})$~\cite{FKMS}.
The only exact values of the limit known are $f(1) = 1$, $f(2) = 1/2$ and $f(3) = 5/24$~\cite{FKMS}.

We tighten the bounds on the maximum size of reverse-free codes 
of length greater or equal to the size of the alphabet.
\begin{theorem}
\label{thm:main}
For every $n \ge k$, we have
\[
n^k k^{-k/2 - O\left(k/\log k \right)} \le
F(n,k) \le \overline{F}(n,k) \le
n^k k^{-k/2 + O\left(k /\log k \right)}.
\]
\end{theorem}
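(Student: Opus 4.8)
The plan is to establish the two nontrivial inequalities separately; the middle inequality $F(n,k)\le\overline F(n,k)$ is immediate, since $F(n,k)$ is the maximum of $|\mathcal{F}|$ over a sub-collection (the repetition-free ones) of the codes appearing in the definition of $\overline F(n,k)$. Throughout I would use the reformulation of a reverse-free code in terms of orientations: for a code $\mathcal{F}\subseteq[n]^k$ and a pair of positions $\{i,j\}$ the set $O_{ij}:=\{(w_i,w_j):w\in\mathcal{F}\}$ never contains both $(a,b)$ and $(b,a)$ for $a\neq b$, and conversely any family of such orientations produces a reverse-free code, namely all words simultaneously consistent with all of them; thus $\overline F(n,k)$ is the maximum over all such orientation families of the number of consistent words. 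Since $2^{O(k)}=k^{O(k/\log k)}$, the two target bounds are the same as $\overline F(n,k)\le c^{k}n^{k}/\sqrt{k!}$ and $F(n,k)\ge c^{-k}n^{k}/\sqrt{k!}$ for an absolute constant $c$.

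For the upper bound I would prove a peeling inequality $\overline F(n,k)\le \frac{cn}{\sqrt k}\,\overline F(n,k-1)$ and iterate it down to $\overline F(n,1)=n$, which yields $\overline F(n,k)\le c^{k}n^{k}/\sqrt{k!}=n^{k}k^{-k/2+O(k/\log k)}$. To prove the inequality, project a reverse-free code $\mathcal{F}$ onto its first $k-1$ coordinates — a reverse-free code, so of size at most $\overline F(n,k-1)$ — and bound the number of admissible last letters: an extension of $u\in[n]^{k-1}$ by a letter $a$ forces $a\in\bigcap_{i<k}N^{+}_{O_{ik}}(u_i)$, where $N^{+}$ is out-neighbourhood. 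Summing over $u$ in the projection, the goal is to show that the \emph{average} size of these intersections is $O(n/\sqrt k)$: in each tournament $O_{ik}$ the out-degrees sum to $\binom n2$, so a typical out-neighbourhood has size about $n/2$, and a second-moment (or convexity) estimate for the size of a typical intersection of $k-1$ such sets gives the $1/\sqrt k$ saving. The delicate point, and the main obstacle, is that for a single $u$ all the relevant out-neighbourhoods can be almost all of $[n]$, so one must rule out the projection being concentrated on such $u$ by trading the ``spread'' of the projected code against the sizes of the extension sets — a potential-function or entropy bookkeeping across the peeling steps — and this is where the reverse-free structure of the whole $k$-dimensional code, not just of the last-coordinate constraints, has to be used.

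For the lower bound I would construct, for every $n\ge k$, a reverse-free code over $[n]$ of length $k$ of density $k^{-k/2-O(k/\log k)}$, essentially saturating the peeling inequality. The naive candidates fall short by a factor of order $\sqrt{k!}$: the code of all repetition-free words increasing in every coordinate is reverse-free but has density $1/k!$, and recursively concatenating optimal shorter reverse-free codes over \emph{disjoint} alphabet blocks (which is reverse-free because equality of a letter across the two blocks is impossible) also only reaches density $\approx 1/k!$, because separating the blocks costs a factor $\binom{k}{k/2}\approx 2^{k}$ at each doubling. To do better one must use orientations far from any single transitive tournament: I would split the positions into about $\sqrt k$ blocks of size about $\sqrt k$, place an optimal reverse-free code inside each block on its own piece of the alphabet, and — the crux — choose the cross-block orientations \emph{not} as ``every letter of block $p$ beats every letter of block $q$'' (which collapses the construction to an increasing word and destroys the saving) but as a carefully chosen family of tournaments, most likely random and analysed by first and second moments, for which the cross-block constraints remain satisfiable for a $2^{-\Theta(k)}$ fraction of block-value configurations rather than a $k^{-\Theta(k)}$ one. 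Designing and verifying these cross-block orientations so that the total density comes out $k^{-k/2-O(k/\log k)}$ is the part I expect to require the most work.

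Finally I would check that the $O(k/\log k)$ error terms absorb all the slack — the base cases and stopping rules of the recursions, the rounding needed when $k$ is not of a convenient form, and every multiplicative factor of the shape $\Theta(1)^{k}$ — using $\Theta(1)^{k}=2^{O(k)}=k^{O(k/\log k)}$.
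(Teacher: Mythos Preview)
Both halves of your plan have genuine gaps, and the paper resolves them with entirely different ideas.

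\textbf{Upper bound.} The peeling inequality $\overline F(n,k)\le \tfrac{cn}{\sqrt k}\,\overline F(n,k-1)$ cannot be obtained from your extension count: an extremal reverse-free code can project to a tiny set (even a single word) each of whose elements has $\Theta(n)$ admissible last letters, so the local argument gives only a factor $n$, not $n/\sqrt k$. You acknowledge this and appeal to an unspecified ``potential/entropy bookkeeping across the peeling steps,'' but that is precisely the missing idea; naive Shearer-type entropy only yields $|\mathcal F|\le n^{k}2^{-k/2}$, far short of $k^{-k/2}$. The paper does not peel coordinates. It forms the \emph{overall matrix} $A_{\mathcal U}$ recording which (position, letter) pairs ever occur in the code, notes that every copy of $S=\bigl(\begin{smallmatrix}1&1\\1&1\end{smallmatrix}\bigr)$ in $A_{\mathcal U}$ forces an avoided pair by reverse-freeness, proves a quantitative $K_{2,2}$-count (Lemma~\ref{l:countS}), and then runs a Raz-style shrinking: if some $1$-entry is \emph{light} (used by $\le |\mathcal U|/n$ words) delete those words; otherwise some $1$-entry lies in many avoided pairs, keep only words containing it, which kills all the partners at once. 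Grouping the heavy steps into geometric phases shows at most $k/6$ of them occur, giving the bound.

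\textbf{Lower bound.} Your construction is internally inconsistent. If each position-block uses ``its own piece of the alphabet,'' then two words can never share a letter across blocks, so cross-block reverses are \emph{impossible regardless of orientation}; the ``carefully chosen cross-block tournaments'' you propose to design are vacuous, and the product density one gets is only $(n/\sqrt k)^{k}(\sqrt k)^{-k/2}=n^{k}k^{-3k/4}$, missing the target by $k^{k/4}$. The paper instead first handles $n=k$: take the incidence matrix $A$ of a projective plane of order $r\approx\sqrt n$; by axiom (P1) $A$ avoids $S$, so the set of permutation matrices contained in $A$ is automatically reverse-free; since $A$ is $(r{+}1)$-regular, the Egorychev--Falikman proof of the van der Waerden conjecture gives $\mathrm{per}(A)\ge((r{+}1)/e)^{n}\ge n^{n/2-O(n/\log n)}$ such permutations. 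General $n$ follows from prime-gap estimates, and the case $n\ge k$ from the elementary inequality $F(n,k)\ge\lfloor n/k\rfloor^{k}F(k,k)$ obtained by taking all words whose reduction $\bmod\,k$ lies in a fixed reverse-free set of $k$-permutations.
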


The first inequality is proven in Section~\ref{sec:lb} as Corollary~\ref{cor:lbGeneral}
and the last inequality is proven as Claim~\ref{cl:ub} in Section~\ref{sec:ub}.
As an immediate consequence, we obtain the following bounds for permutation codes:
\[
n^{n/2 - O \left(n/\log n \right)} \le
F(n,n) \le \overline{F}(n,n) \le
n^{n/2 + O \left(n /\log n \right)}
\]
and for the limit for codes of fixed length $k$:
\[
f(k) \in k^{-k/2 + O(k/\log k)}.
\]

A set of words is \emph{full of flips} if each two words from the set have a reverse.
Let $\overline{G}(n,k)$ be the size of the largest code full of flips with elements in $[n]^{k}$.
Let $G(n,k)$ be the size of the largest code full of flips with elements in $[n]_{(k)}$.
By $G(n,n)F(n,n) \le n!$~\cite{FKMS}, we obtain the following corollary.
\begin{corollary}
The size of a set of permutations full of flips is at most
\[
G(n,n) \le n^{n/2 + O \left(n/\log n \right)}.
\]
\end{corollary}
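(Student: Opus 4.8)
\medskip

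\noindent\textbf{Proof proposal.}
The plan is to obtain the bound purely by combining two facts already in hand: the inequality $G(n,n)\,F(n,n) \le n!$ of F\"uredi, Kantor, Monti and Sinaimeri~\cite{FKMS}, and the lower bound on $F(n,n)$ furnished by Theorem~\ref{thm:main}. The first step is to rearrange the product inequality as
\[
G(n,n) \le \frac{n!}{F(n,n)},
\]
so that any lower bound on $F(n,n)$ immediately yields an upper bound on $G(n,n)$.

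Next I would substitute the permutation case $k=n$ of the first inequality of Theorem~\ref{thm:main}, namely $F(n,n) \ge n^{n/2 - O(n/\log n)}$, to get $G(n,n) \le n!\cdot n^{-n/2 + O(n/\log n)}$. It then remains only to estimate $n!$ in powers of $n$. Using Stirling in the crude form $\ln(n!) = n\ln n - n + O(\ln n)$, and noting that $e^{-n} = n^{-n/\ln n}$ while the polynomial factor $\sqrt{2\pi n}$ contributes only $n^{O(1/\log n)}$, we have $n! = n^{n + O(n/\log n)}$. Plugging this in gives
\[
G(n,n) \le n^{n + O(n/\log n)} \cdot n^{-n/2 + O(n/\log n)} = n^{n/2 + O(n/\log n)},
\]
which is the asserted bound.

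The proof has essentially no obstacle: the only step carrying mathematical content is the invocation of the lower bound on $F(n,n)$, and that content is exactly what Theorem~\ref{thm:main} supplies. The one thing to be careful about is the bookkeeping with error terms — one must check that the $O(n/\log n)$ loss coming from Stirling's approximation of $n!$ and the $O(n/\log n)$ loss hidden in the lower bound on $F(n,n)$ merge into a single $O(n/\log n)$ in the exponent of $n$; this is immediate since both error terms are of the same order. I would state the corollary with this single $O(n/\log n)$ term, mirroring the form of Theorem~\ref{thm:main}.
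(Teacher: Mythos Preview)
Your proposal is correct and follows exactly the same approach as the paper: the paper derives the corollary in one line from $G(n,n)F(n,n)\le n!$ together with the lower bound on $F(n,n)$ given by Theorem~\ref{thm:main}. Your only addition is the explicit Stirling bookkeeping, which the paper leaves implicit.
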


A position of an entry of a matrix is represented by a pair $(r,c)$ of the row number $r$ and the column number $c$.
A $\{0,1\}$-matrix is a matrix whose each entry is either $0$ or $1$.
Every matrix in this paper is a $\{0,1\}$-matrix, even when it is not explicitly mentioned.

All logarithms in this paper are of base $2$.

\section{Lower Bound}
\label{sec:lb}
A submatrix of a matrix $B$ is a matrix that can be obtained from $B$ by the removal of some columns and rows.
An $m\times n$ $\{0,1\}$-matrix $B$ \emph{contains} a $k\times l$ $\{0,1\}$-matrix $Q$ if $B$ has a $k\times l$
submatrix $T$ that can be obtained from $Q$ by changing some (possibly none) $0$-entries to $1$-entries.
Otherwise $B$ \emph{avoids} $Q$.

F\"{u}redi and Hajnal~\cite{FurediHajnal} studied the following problems from the extremal theory of
$\{0,1\}$-matrices. Given a matrix $Q$ (the \emph{forbidden matrix}), what is the maximum number
of $1$-entries in an $n\times n$ matrix that avoids $Q$?

We restrict our attention on forbidding the $2 \times 2$ matrix with each entry equal to $1$
and we call this matrix $S$.
Maximizing the number of $1$'s in a matrix avoiding $S$ is closely related to 
maximizing the number of edges in an $n$-vertex graph without a $4$-cycle as a subgraph~\cite{FurediHajnal}.
The maximum number of edges in a $4$-cycle-free graph
is known precisely for infinitely many values of $n$~\cite{Furedi96}.
We will use a classical construction of a bipartite $4$-cycle-free graph 
(see for example the book of Matou\v{s}ek and Ne\v{s}et\v{r}il~\cite{MNBook}).
We reproduce the construction here in the matrix setting since we need some of its additional properties.


The construction of a matrix avoiding $S$ builds the matrix using a finite projective plane.
Let $X$ be a finite set and let $\Lcal$ be a family of subsets of $X$.
The set system $(X, \Lcal)$ is a \emph{finite projective plane} if
\begin{enumerate}
\item[(P0)] There is a $4$-tuple $F$ of elements of $X$ such that $|F \cap L| \le 2$ for every $L \in \Lcal$.
\item[(P1)] For every $L_1, L_2 \in \Lcal$, $|L_1 \cap L_2| = 1$.
\item[(P2)] For every $x,y \in X$ there exists exactly one $L \in \Lcal$ containing both $x$ and $y$.
\end{enumerate}

For every finite projective plane, we can find a number $r$, called the \emph{order of the projective plane},
satisfying:
\begin{enumerate}
\item[(P3)] For every $L \in \Lcal$, $|L| = r+1$.
\item[(P4)] Every $x \in X$ is contained in exactly $r+1$ sets from $\Lcal$.
\item[(P5)] We have $|X| = |\Lcal| = r^2 + r + 1$. This value is the \emph{size} of the projective plane.
\end{enumerate}

It is known that for every number $r$ that is a power of a prime number,
we can find a finite projective plane of order $r$~\cite{MNBook}.

\begin{claim}
\label{cl:lbprime}
If $n$ is of the form $r^2 + r + 1$, where $r$ is a power of a prime,
then 
\[
F(n,n) \ge n^{n/2 - O(n/\log n)}. 
\]
\end{claim}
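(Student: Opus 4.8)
The plan is to realise a reverse-free code of permutations as the family of \emph{all} permutation matrices dominated by a single well-chosen host matrix $B$, and then to bound the number of such matrices from below by a lower bound on the permanent of $B$.

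First I would fix the host matrix. Since $r$ is a prime power there is a finite projective plane of order $r$; let $B$ be its point--line incidence matrix, an $n\times n$ $\{0,1\}$-matrix with $n = r^2+r+1$ whose rows are indexed by the points $X$, columns by the lines $\Lcal$, and $B[x,L]=1$ iff $x\in L$. By (P3) and (P4) every row and every column of $B$ has exactly $r+1$ ones, so $B$ is $(r+1)$-regular. Moreover $B$ avoids $S$: a $2\times 2$ all-ones submatrix of $B$ would exhibit two distinct points $x\ne y$ lying simultaneously on two distinct lines $L_1\ne L_2$, forcing $|L_1\cap L_2|\ge 2$ and contradicting (P1).

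Next I would pass from $B$ to permutations. Identify $w\in S_n$ with its permutation matrix $M_w$, which has a single $1$ in each row and column, at position $(i,w_i)$, and set $\Fcal=\{\,w\in S_n : M_w\le B\,\}$, the permutations using only $1$-entries of $B$. Then $\Fcal$ is reverse-free: if $w,x\in\Fcal$ had a reverse on positions $i\ne j$, say $w_i=x_j=a$ and $w_j=x_i=b$ with $a\ne b$, then $B$ would carry $1$-entries at all four positions $(i,a),(i,b),(j,a),(j,b)$, that is, an occurrence of $S$, a contradiction. Hence $F(n,n)\ge|\Fcal|$, and $|\Fcal|=\mathrm{perm}(B)$, the number of permutation matrices below $B$ (equivalently, the number of perfect matchings of the point--line incidence graph).

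It remains to lower-bound $\mathrm{perm}(B)$, and this is the only step with real content. Since $B$ is $(r+1)$-regular, $\tfrac1{r+1}B$ is doubly stochastic, so the van der Waerden lower bound (the Egorychev--Falikman theorem) --- or, alternatively, Schrijver's bound for regular bipartite graphs --- gives $\mathrm{perm}(B)\ge (r+1)^n\,n!/n^n$. Now $(r+1)^2=r^2+2r+1>n$, so $(r+1)^n>n^{n/2}$, while by Stirling $n!/n^n = 2^{-n\log_2 e+O(\log n)} = n^{-O(n/\log n)}$; multiplying these, $F(n,n)\ge\mathrm{perm}(B)\ge n^{n/2-O(n/\log n)}$, as claimed. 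The reason the argument goes through --- and does so tightly --- is that the projective plane supplies a host matrix that is simultaneously $S$-free and essentially as dense and regular as an $S$-free matrix can be; securing that trade-off is the heart of the matter, everything else being bookkeeping with the asymptotics.
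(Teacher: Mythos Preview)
Your proposal is correct and follows essentially the same argument as the paper: build the incidence matrix of a projective plane of order $r$, observe that (P1) makes it $S$-free so that the permutations it dominates form a reverse-free family, and lower-bound their number via the Egorychev--Falikman (van der Waerden) permanent bound together with $(r+1)^2>n$ and Stirling. The only cosmetic difference is that the paper indexes rows by lines and columns by points rather than the other way round.
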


\begin{proof}
We fix a projective plane $(X, \Lcal)$ of size $n$.
We order the elements of $X$ and the sets of $\Lcal$ arbitrarily.
The \emph{incidence matrix} of a finite projective plane of size $n$
is the $n \times n$ matrix $A$ with $1$ on position $(i,j)$
exactly if the $i$-th set of $\Lcal$ contains the $j$-th element of $X$.
Let $A$ be the incidence matrix of $(X, \Lcal)$.

An \emph{$n$-permutation matrix} is an $n \times n$ matrix with exactly one $1$-entry in every column and every row.
An \emph{$n$-permutation} is a permutation on $n$ elements.
The following is a bijection between the set of $n$-permutations and the set of $n$-permutation matrices.
A permutation $\pi$ is matched with the matrix $P$ with $1$ on position $(i,j)$ exactly if $\pi_i = j$.
Let $\Pcal$ be the set of $n$-permutation matrices contained in $A$ 
and let $\Pi$ be the set of $n$-permutations matched to the matrices from $\Pcal$.
By (P3) and (P4), $A$ has exactly $r+1$ $1$'s in every row and every column.
Thus by the van der Waerden conjecture proved independently by Falikman~\cite{Falikman} and Egorychev~\cite{Egorychev},
\[
|\Pi| = |\Pcal| 
\ge \left(\frac{r+1}{n}\right)^n n!
\ge \left(\frac{r+1}{e}\right)^n
\ge \left(\frac{n^{1/2}}{e}\right)^n
\ge n^{n/2 - O(n/\log n)}.
\]

We claim that the set $\Pi$ is pairwise reverse-free.
For contradiction, we take $\pi \in \Pi$ and $\rho \in \Pi$ with a reverse on positions $i$ and $j$.
That is, $\pi_i = k$, $\pi_j = l$, $\rho_i = l$, $\rho_j = k$ for some $k$ and $l$.
Since $P_{\pi}$ and $P_{\rho}$ are contained in $A$,
this implies that $A$ contains the matrix $S$ on rows $i,j$ and columns $k$ and $l$; a contradiction with (P1).
\end{proof}

By the prime number theorem,
the gaps between two consecutive prime numbers in proportion to the primes tend to zero.
There has been a significant progress in tightening the gap between two consecutive primes.
Most recent is the following result of Baker, Harman and Pintz~\cite{BHP}.

\begin{theorem}[Baker, Harman, Pintz, 2001]
For every large enough $n$, the interval $[n-n^{0.525}, n]$ contains a prime number.
\end{theorem}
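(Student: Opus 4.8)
This is a celebrated result of analytic number theory; in this paper it is used only as a black box, but for orientation we recall the shape its proof takes. The starting point is to leave primes aside and work with the Chebyshev function $\psi(x)=\sum_{m\le x}\Lambda(m)$, the goal being the one-sided estimate $\psi(x)-\psi(x-y)\gg y$ with $y=x^{0.525}$. Since the prime powers that are not primes contribute only $O(\sqrt{x}\log x)$ to $\psi$, such a bound forces a genuine prime into $(x-y,x]$, and a change of variable rewrites this as the statement about $[n-n^{0.525},n]$; so the whole problem is to rule out a gap of length $y$ in the primes just below $x$.

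To handle $\psi(x)-\psi(x-y)$ one expands $\Lambda$ by a combinatorial identity --- Vaughan's identity, or, for a sharper exponent, the Heath--Brown identity --- thereby writing $\sum_{x-y<m\le x}\Lambda(m)$ as a bounded number of multilinear sums in which each variable runs over a dyadic block and the coefficients are a truncated $\log$, or $\mu$, or $1$. Opening the constraint $m_1\cdots m_j\in(x-y,x]$ by Perron's formula turns each such sum into an integral, over a vertical segment of height about $T\approx x/y=x^{0.475}$, of a product of Dirichlet polynomials, so everything reduces to bounding mean values and large values of Dirichlet polynomials. For most configurations of the block sizes the classical machinery --- the fourth-power moment of $\zeta$, Hal\'asz's large-values inequality, the Huxley and Heath--Brown large-values theorems, the Heath--Brown--Iwaniec and Iwaniec--Jutila bounds --- already delivers an asymptotic formula for the corresponding piece.

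The real content, and the contribution of Baker, Harman and Pintz, is the treatment of the block-size ranges for which no available Dirichlet-polynomial estimate is strong enough to give an asymptotic. There one abandons asymptotics and runs Harman's sieve instead: the primes in $(x-y,x]$ are isolated by a Buchstab-type sieve whose arithmetic input is exactly the set of bilinear estimates one can actually prove, and whose combinatorial bookkeeping --- which subsums to estimate, which to discard with a one-sided inequality, which to re-expand by a further Buchstab step --- is arranged so that every surviving term falls in a range where an estimate applies. The number $0.525$ is simply what drops out of optimising this scheme against the analytic inputs, and the main obstacle is precisely that optimisation: lowering the exponent would need either sharper large-values estimates for Dirichlet polynomials or a more efficient sieve decomposition, and balancing the two is genuinely delicate. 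For the present paper nothing beyond the qualitative corollary is needed --- that the gap between consecutive primes near $N$ is $O(N^{0.525})$ --- which, applied near $\sqrt{n}$, lets us replace an arbitrary $n$ by a nearby integer of the form $r^2+r+1$ with $r$ prime, so that Claim~\ref{cl:lbprime} applies, losing only an $n^{o(n/\log n)}$ factor in the codeword count, which is absorbed into the $n^{-O(n/\log n)}$ slack of the bound.
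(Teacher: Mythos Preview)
The paper does not prove this theorem at all: it is quoted verbatim from \cite{BHP} and used purely as a black box in the proof of Lemma~\ref{lem:lbperm}. Your proposal correctly identifies this, and the expository sketch you add of the Baker--Harman--Pintz argument is accurate in outline (combinatorial decomposition of $\Lambda$, reduction to Dirichlet-polynomial mean values, Harman's sieve for the recalcitrant ranges), though it goes well beyond anything the paper itself supplies. Your final paragraph also correctly summarises how the result is applied here --- to find a prime $p$ near $\sqrt{n}$ so that $n' = p^2+p+1$ is close to $n$ --- which is exactly the content of the proof of Lemma~\ref{lem:lbperm}.
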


\begin{lemma}
\label{lem:lbperm}
For every $n$,
\[
F(n,n) \ge n^{n/2 - O \left(n/\log n \right)}.
\]
\end{lemma}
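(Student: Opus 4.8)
The plan is to reduce to Claim~\ref{cl:lbprime}: I would approximate an arbitrary $n$ from below by a number $m = r^2+r+1$ with $r$ a prime power and with $m$ only polynomially smaller than $n$, apply Claim~\ref{cl:lbprime} to $m$, and then lift a reverse-free code of $m$-permutations to one of $n$-permutations.

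The lifting step is the observation that $F(m,m) \le F(n,n)$ whenever $m \le n$. Given a reverse-free set $\Pi$ of $m$-permutations, extend each $\pi \in \Pi$ to an $n$-permutation $\pi'$ by $\pi'_i := \pi_i$ for $i \le m$ and $\pi'_i := i$ for $i > m$. If $\pi'$ and $\rho'$ had a reverse on positions $i < j$, then necessarily $j \le m$: if instead $j > m$ the reverse would force $\pi'_i = \rho'_j = j$, whereas $\pi'_i < j$ for every such $i$ (because $\pi'_i = \pi_i \le m$ when $i \le m$, and $\pi'_i = i$ when $i > m$). Hence both positions are $\le m$, so the reverse would be a reverse of $\pi$ and $\rho$ itself, which is impossible. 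Since $\pi \mapsto \pi'$ is injective, the lifted code is reverse-free of the same size.

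Now I would choose $m$. For $n$ below any fixed bound the statement is vacuous, since $F(n,n) \ge 1$ while $n^{n/2 - Cn/\log n} = \bigl(n^{1/2}2^{-C}\bigr)^n$ is $< 1$ once the implied constant $C$ is large enough; so assume $n$ is large. Let $x$ be the largest integer with $x^2 + x + 1 \le n$, so $x = \Theta(\sqrt n)$ and $n - (x^2+x+1) = O(\sqrt n)$. By the Baker--Harman--Pintz theorem there is a prime $r$ with $x - x^{0.525} \le r \le x$; put $m := r^2 + r + 1 \le n$. Then
\[
n - m = \bigl(n - (x^2+x+1)\bigr) + \bigl((x^2+x+1) - (r^2+r+1)\bigr) = O(\sqrt n) + O\bigl(x^{1.525}\bigr) = O\bigl(n^{0.7625}\bigr).
\]
Since $r$ is a prime power, Claim~\ref{cl:lbprime} applies to $m$, and together with the lifting step,
\[
\log F(n,n) \ \ge\ \log F(m,m) \ \ge\ \tfrac m2 \log m - O\!\left(\tfrac{m}{\log m}\right) \ =\ \tfrac n2 \log n - \left(\tfrac n2 \log n - \tfrac m2 \log m\right) - O\!\left(\tfrac{n}{\log n}\right).
\]
Finally, the mean value theorem applied to $t \mapsto t \log t$ gives $\tfrac n2 \log n - \tfrac m2 \log m = O\bigl((n-m)\log n\bigr) = O\bigl(n^{0.7625}\log n\bigr)$, which is $o(n/\log n)$ since $n^{0.7625}(\log n)^2 = o(n)$; hence $\log F(n,n) \ge \tfrac n2 \log n - O(n/\log n)$, as required.

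The one quantitative point is the choice of $m$, and that is where I expect the (mild) difficulty to sit: one needs the deficiency $n - m$ to be \emph{polynomially} smaller than $n$, so that $O\bigl((n-m)\log n\bigr)$ is absorbed by the $O(n/\log n)$ error term. Any prime-gap bound of the shape ``$[y - y^{\theta}, y]$ contains a prime'' with a fixed $\theta < 1$ would suffice, because after taking the square root it yields $n - m = O\bigl(n^{(1+\theta)/2}\bigr)$ with $(1+\theta)/2 < 1$; the Baker--Harman--Pintz value $\theta = 0.525$ is far more than enough, and nothing beyond invoking it is needed.
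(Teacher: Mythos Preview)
Your proof is correct and follows essentially the same route as the paper: approximate $n$ from below by $m=r^2+r+1$ with $r$ prime using Baker--Harman--Pintz, apply Claim~\ref{cl:lbprime} at $m$, and lift by appending the fixed tail $(m+1,\dots,n)$. Your write-up is in fact more careful than the paper's in two places---the verification that the lifted code stays reverse-free and the mean-value estimate for $\tfrac{n}{2}\log n-\tfrac{m}{2}\log m$---but the underlying argument is the same.
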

\begin{proof}
For an arbitrary $n$ we take the largest $n'$ smaller than $n$
and expressible as $p^2 + p + 1$ for some prime $p$.
The interval $[n^{1/2} - 1 - n^{0.525/2}, n^{1/2} - 1]$ contains a suitable prime number $p$.
Thus
\begin{align*}
p &\ge n^{1/2} - 1 - n^{0.525/2}  \quad \text{and} \\
n' &\ge n-O(n^{1.525/2}).
\end{align*}
We take the set $\Pi'$ of $(n')^{n'/2 - O(n'/\log n')}$
pairwise reverse-free $n'$-permutations from Claim~\ref{cl:lbprime}.
We append the sequence $(n'+1, n'+2, \dots, n)$ to the end of each $\pi' \in \Pi'$.
Let the resulting set of $n$-permutations be $\Pi$.
The set $\Pi$ of permutations is pairwise reverse-free and has size at least $n^{n/2 - O(n/\log n)}$.
\end{proof}

\begin{lemma}
\label{lem:lbGeneral}
For every $n \ge k$,
\[
F(n,k) \ge \left\lfloor \frac{n}{k}\right\rfloor^k  F(k,k).
\]
\end{lemma}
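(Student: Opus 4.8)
The plan is to build a reverse-free code in $[n]^k$ by taking $\lfloor n/k \rfloor^k$ disjoint ``translated copies'' of an optimal reverse-free code in $[k]^k$, arranged so that no two words from different copies can ever form a reverse. First I would partition the alphabet $[n]$ into $\lfloor n/k \rfloor$ blocks $B_1, \dots, B_m$ (with $m = \lfloor n/k \rfloor$), each of size exactly $k$ (discarding any leftover symbols), say $B_t = \{(t-1)k+1, \dots, tk\}$. For each block $B_t$ fix the order-preserving bijection $\phi_t \colon [k] \to B_t$, $\phi_t(a) = (t-1)k + a$. Now let $\Fcal \subseteq [k]_{(k)}$ be a reverse-free code of size $F(k,k)$.

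The construction: for every function $\sigma \colon [k] \to [m]$ (there are $m^k$ of these) and every word $w \in \Fcal$, define the word $u^{(\sigma,w)} \in [n]^k$ by $u^{(\sigma,w)}_i = \phi_{\sigma(i)}(w_i)$; that is, in position $i$ we take the $w_i$-th symbol of block $B_{\sigma(i)}$. Each $u^{(\sigma,w)}$ is repetition-free because $w$ is (the map $i \mapsto u^{(\sigma,w)}_i$ is injective: if $\sigma(i) \ne \sigma(j)$ the symbols lie in different blocks, and if $\sigma(i) = \sigma(j)$ then $w_i \ne w_j$ forces distinct symbols). Distinct pairs $(\sigma, w)$ give distinct words: the block-membership pattern $i \mapsto \sigma(i)$ is recoverable from $u^{(\sigma,w)}$, and then $w_i = \phi_{\sigma(i)}^{-1}(u^{(\sigma,w)}_i)$ is recoverable too. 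So the total size is $m^k F(k,k) = \lfloor n/k \rfloor^k F(k,k)$, as desired.

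It remains to check the code is reverse-free. Suppose $u^{(\sigma,w)}$ and $u^{(\tau,x)}$ have a reverse on positions $(i,j)$: then $u^{(\sigma,w)}_i = u^{(\tau,x)}_j$ and $u^{(\sigma,w)}_j = u^{(\tau,x)}_i$, with these two symbols distinct. Reading off block membership, $\sigma(i) = \tau(j)$ and $\sigma(j) = \tau(i)$. Reading off within-block position, $w_i = x_j$ and $w_j = x_i$, and since the two symbols are distinct and lie in (possibly) different blocks we argue $w_i \ne w_j$: indeed if $w_i = w_j$ then also $x_i = x_j$, and then $u^{(\sigma,w)}_i = \phi_{\sigma(i)}(w_i)$ while $u^{(\sigma,w)}_j = \phi_{\sigma(j)}(w_i)$; for these to be distinct we'd need $\sigma(i) \ne \sigma(j)$, but then $\tau(i) = \sigma(j) \ne \sigma(i) = \tau(j)$, and $u^{(\tau,x)}_i = \phi_{\tau(i)}(x_i)$, $u^{(\tau,x)}_j = \phi_{\tau(j)}(x_i)$ lie in different blocks — consistent so far, so this case needs a little more care. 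The cleaner route is: the reverse condition forces $\sigma(i) = \tau(j)$ and $\tau(i) = \sigma(j)$; combined with $w_i = x_j$, $w_j = x_i$ and $w_i \ne w_j$ or ($w_i = w_j$ but then the symbols coincide unless the blocks differ). I expect the main obstacle to be exactly this bookkeeping: one must show that a reverse between the two big words descends to a reverse between $w$ and $x$ within a single copy of $[k]^k$. The key observation making it work is that the block index in position $i$ of $u^{(\sigma,w)}$ equals $\sigma(i)$, and the reverse relation swaps the block indices in positions $i,j$, so it also swaps within $\Fcal$; hence $w$ and $x$ (which may be equal) would have a reverse in $[k]_{(k)}$, contradicting that $\Fcal$ is reverse-free (and if $w = x$, a word never has a reverse with itself, so the two symbols would have to coincide, contradicting distinctness). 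This contradiction completes the proof.
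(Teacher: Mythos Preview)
Your construction is correct and essentially the same as the paper's (the paper phrases it via residues $u_i \bmod k$ rather than consecutive blocks of size $k$, but the two partitions of $[n]$ play identical roles). Your reverse-free verification is needlessly tangled: since $w \in [k]_{(k)}$ is repetition-free you have $w_i \ne w_j$ immediately, so the derived relations $w_i = x_j$, $w_j = x_i$ already constitute a reverse between $w$ and $x$ in $\Fcal$, and the whole case analysis about $w_i = w_j$ and $w = x$ can be dropped.
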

\begin{proof}
Let $\Pi$ be a reverse-free set of $k$-permutations of size $F(k,k)$.
Given a word $u = (u_1,u_2, \dots, u_k) \in [n]_{(k)}$,
we call the word $(u_1 \bmod k, u_2 \bmod k, \dots, u_k \bmod k)$ the \emph{compression} of $u$.
Let $\Fcal$ be a set of all the words in $[n]_{(k)}$ whose compression is in $\Pi$.
The size of $\Fcal$ is at least $\left\lfloor \frac{n}{k} \right\rfloor ^k |\Pi|$.
It remains to show that $\Fcal$ is reverse-free.
For contradiction, assume that some pair of words $(u_1, \dots, u_k)$ and $(v_1, \dots, v_k)$
has a reverse on the pair $(i,j)$ of positions.
That is, $u_i = v_j$ and $u_j = v_i$ and,
in particular, $u_i \bmod k = v_j \bmod k$ and $u_j \bmod k = v_i \bmod k$.
Because the compression of $u$ is a permutation, $u_i \bmod k \neq u_j \bmod k$.
This is a contradiction, because the compressions of $u$ and $v$ are in the reverse-free set $\Pi$.
\end{proof}

\begin{corollary}
\label{cor:lbGeneral}
For every $n \ge k$,
\[
F(n,k) \ge n^k k^{-k/2 - O\left(k/\log k \right)}.
\]
\end{corollary}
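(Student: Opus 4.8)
The plan is to combine Lemma~\ref{lem:lbGeneral} with Lemma~\ref{lem:lbperm}: the former reduces $F(n,k)$ to $F(k,k)$ up to a factor of $\lfloor n/k\rfloor^k$, and the latter, applied with $n := k$, already gives $F(k,k) \ge k^{k/2 - O(k/\log k)}$. So the statement should follow by bookkeeping, and I do not expect a genuine obstacle; the only care needed is to check that the rounding in $\lfloor n/k\rfloor$ and a spurious $2^{-k}$ factor both disappear into the $O(k/\log k)$ correction, uniformly for all $n \ge k$.

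First I would record a lower bound on the floor. Write $n = qk + r$ with $q \ge 1$ (possible since $n \ge k$) and $0 \le r < k$, so that $\lfloor n/k\rfloor = q$. Since $qk \ge k > r$, we get $q \ge q/2 + r/(2k) = n/(2k)$, hence $\lfloor n/k\rfloor^k \ge (n/(2k))^k = n^k\, 2^{-k}\, k^{-k}$.

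Next, plug this and Lemma~\ref{lem:lbperm} into Lemma~\ref{lem:lbGeneral}:
\[
F(n,k) \ge \left\lfloor \frac{n}{k}\right\rfloor^k F(k,k) \ge n^k\, 2^{-k}\, k^{-k} \cdot k^{k/2 - O(k/\log k)} = n^k\, 2^{-k}\, k^{-k/2 - O(k/\log k)}.
\]
Finally I would observe that $2^{-k} = k^{-k/\log k}$ — taking logarithms base $2$ of both sides gives $-k = -(k/\log k)\log k$ — so the factor $2^{-k}$ is itself of the form $k^{-O(k/\log k)}$ and can be absorbed into the error term, yielding $F(n,k) \ge n^k k^{-k/2 - O(k/\log k)}$, as claimed.

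The one point worth double-checking is the regime $k \le n < 2k$, where $\lfloor n/k\rfloor = 1$. There $n^k < (2k)^k$ forces the target right-hand side $n^k k^{-k/2 - O(k/\log k)}$ to be at most $2^k k^{k/2 - O(k/\log k)} = k^{k/2 + O(k/\log k)}$, while $F(k,k) \le F(n,k)$ is already $k^{k/2 - O(k/\log k)}$; enlarging the implied constant in $O(\cdot)$ covers this case, which is exactly what the uniform estimate $\lfloor n/k\rfloor \ge n/(2k)$ accomplishes in one stroke.
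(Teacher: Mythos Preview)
Your proof is correct and follows essentially the same route as the paper: combine Lemma~\ref{lem:lbGeneral} with Lemma~\ref{lem:lbperm} (applied at $n=k$), use $\lfloor n/k\rfloor \ge n/(2k)$, and absorb the $2^{-k}$ factor into the $k^{-O(k/\log k)}$ error. The paper's version is just terser, while you spell out the floor bound and the identity $2^{-k}=k^{-k/\log k}$ explicitly; the final paragraph is redundant since, as you note, the uniform bound $\lfloor n/k\rfloor \ge n/(2k)$ already handles the $k\le n<2k$ range.
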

\begin{proof}
Since $n \ge k$, we have $\lfloor n/k \rfloor \ge n/(2k)$.
Therefore, by Lemmas~\ref{lem:lbperm}~and~\ref{lem:lbGeneral}
\[
F(n,k) \ge 
\left(\frac{n}{2k}\right)^k F(k,k) 
\ge \frac{n^k}{(2k)^k} k^{k/2 - O(k/\log k)} 
\ge n^k k^{-k/2 - O(k/\log k)}.
\]

\end{proof}

\section{Upper Bound}
\label{sec:ub}
We use a result claiming that a matrix with many $1$-entries contains many occurrences of the matrix $S = \theforbmatrix$.
This corresponds to counting the occurrences of $K_{2,2}$ in a bipartite graph.
Erd\H{o}s and Simonovits~\cite{ErdosSimonovits} proved that an arbitrary graph $G$ with $e(G)$ edges and $v(G)$ vertices 
contains at least $e(G)^4/(2v(G)^4) - e(G)^2/(2v(G))$ copies of $K_{2,2}$.
Sidorenko~\cite{Sidorenko} proves a general result that also gives a lower bound on the number of occurrences of $K_{2,2}$ 
(and several other bipartite graphs) in a graph with many edges. 
It follows from~\cite[Condition B]{Sidorenko} that a bipartite graph with parts of size $n$ and $k$ 
contains $e(G)^4/(4 n^2 k^2) - O(nk^2 + n^2k)$ copies of $K_{2,2}$.

Neither of these results is applicable in cases when $k$ is much smaller 
than $n$ and the number of edges is of the order $O(nk^{1/2})$.
Thus, we follow the approach used in the above mentioned papers 
to prove the following lemma which gives a more precise bound in such cases.
This approach appeared already in 1964 
in a proof of a similar result of Erd\H{o}s and Moon~\cite{ErdosMoon}.

\begin{lemma}
\label{l:countS}
Let $k$ and $n$ be integers such that $n \ge k \ge 1$ 
and let $m$ be a real number from the closed interval $[1, k^{1/2}]$.
Let $A$ be a $k \times n$ $\{0,1\}$-matrix with at least $mnk^{1/2}$ $1$-entries.
The number of occurrences of $S$ in $A$ is at least
\[
\frac{n^2(m^2-1)^2}{4} - m^3 n k^{1/2}.
\]
\end{lemma}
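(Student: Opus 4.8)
The plan is to count, for each pair of rows, the number of columns in which both rows carry a $1$, and then use convexity to turn an average-degree bound into a lower bound on the number of occurrences of $S$. First I would let $d_j$ denote the number of $1$-entries in column $j$ of $A$, so that $\sum_{j=1}^n d_j \ge mnk^{1/2}$. A pair of rows $\{i_1,i_2\}$ together with a column $j$ in which both rows have a $1$ gives, after possibly changing some $0$'s to $1$'s, an occurrence of $S$ exactly when there is a \emph{second} column $j'$ in which both of those rows also carry a $1$; so the number of occurrences of $S$ equals the number of triples consisting of an unordered pair of columns $\{j,j'\}$ and an unordered pair of rows both of which have a $1$ in column $j$ and in column $j'$. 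Equivalently, summing over pairs of rows, if $c_{i_1,i_2}$ is the number of columns in which rows $i_1$ and $i_2$ both have a $1$, then the number of occurrences of $S$ is $\sum_{\{i_1,i_2\}} \binom{c_{i_1,i_2}}{2}$.

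Next I would count the same triples by first choosing a column. Each column $j$ contributes $\binom{d_j}{2}$ pairs of rows that share a $1$ in column $j$, so $\sum_{\{i_1,i_2\}} c_{i_1,i_2} = \sum_{j=1}^n \binom{d_j}{2}$. By convexity of $\binom{x}{2}$ (applied to the $d_j$'s, whose sum is at least $mnk^{1/2}$), this is at least $n\binom{mk^{1/2}}{2} = \tfrac{1}{2} n\, mk^{1/2}(mk^{1/2}-1)$. Now I would apply convexity of $\binom{x}{2}$ a second time, this time to the $\binom{k}{2}$ values $c_{i_1,i_2}$: the number of occurrences of $S$ is $\sum_{\{i_1,i_2\}}\binom{c_{i_1,i_2}}{2} \ge \binom{k}{2}\binom{\bar c}{2}$ where $\bar c = \bigl(\sum c_{i_1,i_2}\bigr)\big/\binom{k}{2} \ge \tfrac{n\, mk^{1/2}(mk^{1/2}-1)}{k(k-1)} \ge \tfrac{n(m^2 k - mk^{1/2})}{k^2} \ge \tfrac{n(m^2-1)}{k}$, using $mk^{1/2}\le k$ so that $mk^{1/2}-1 \ge mk^{1/2}\cdot\tfrac{k-1}{k}$ can be arranged, or more simply just bounding $\bar c$ directly from below. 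Substituting, $\binom{k}{2}\binom{\bar c}{2} \ge \tfrac{k^2}{4}\cdot\tfrac{\bar c^2 - \bar c}{1}\cdot\tfrac12$-type estimates yield the leading term $\tfrac{n^2(m^2-1)^2}{4}$ after expanding $\binom{k}{2}\binom{\bar c}{2} = \tfrac{k(k-1)}{2}\cdot\tfrac{\bar c(\bar c-1)}{2}$.

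The one term that requires care is the lower-order correction $-m^3 nk^{1/2}$: expanding the product $\binom{k}{2}\binom{\bar c}{2}$ produces, besides $\tfrac{n^2(m^2-1)^2}{4}$, several error terms coming from (i) the $-1$ inside the inner $\binom{\bar c}{2}$, i.e.\ a term of order $k^2 \bar c \sim k^2 \cdot n m^2/k = nm^2 k$, which is dominated by $m^3 n k^{1/2}$ precisely when $m\ge k^{1/2}/m^2$... here one uses $m\le k^{1/2}$ to see $k \le m^2 k^{1/2}\cdot(k^{1/2}/m^2)$, so in fact $nm^2k \le m^3 n k^{1/2}\cdot(k^{1/2}/m)$ and one must instead absorb it using $m\le k^{1/2}$ to write $nm^2 k = nm^2k^{1/2}\cdot k^{1/2} \le nm^2 k^{1/2}\cdot k^{1/2}$ — so the honest bookkeeping is to collect every error term, factor out $nk^{1/2}$, and check that what remains is bounded by $m^3$ for all $m\in[1,k^{1/2}]$, which is where the hypothesis $m\le k^{1/2}$ and $n\ge k$ both get used. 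The main obstacle is therefore not the idea — it is the purely computational verification that the accumulated lower-order terms all fit under the single clean bound $m^3 nk^{1/2}$; I expect this to come down to the double-counting identity $\sum_j \binom{d_j}{2} = \sum_{\{i_1,i_2\}} c_{i_1,i_2}$ plus two applications of Jensen, followed by a careful expansion.
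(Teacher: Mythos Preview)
Your strategy---double counting plus two applications of Jensen---is exactly the paper's. But two things differ, and the second one is where your proposal breaks down.

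\textbf{Rows and columns are swapped.} You let $d_j$ be \emph{column} degrees and then average $c_{i_1,i_2}$ over the $\binom{k}{2}$ row pairs. The paper does the opposite: it takes row degrees, obtains $q=\sum_i\binom{d_i}{2}$, and then averages $r_{i,j}$ over the $\binom{n}{2}$ column pairs. Since the matrix is $k\times n$ with $n\ge k$, this choice is not symmetric and it changes the shape of the lower-order terms. In the paper's order the average of the second-stage variables is $\approx m^2$ (bounded independently of $n$), while in your order $\bar c\approx nm^2/k$, so the linear term $-\bar c/2$ multiplied by $\binom{k}{2}$ gives an error of order $nm^2k$.

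\textbf{The error analysis is the real gap.} You correctly spot that the term of order $nm^2k$ is \emph{not} bounded by $m^3nk^{1/2}$ unless $m\gtrsim k^{1/2}$, and then the discussion stalls. Your stated expectation---that each error term, after factoring out $nk^{1/2}$, is bounded by $m^3$---is false for this term. The paper does not try to fit all errors under $m^3nk^{1/2}$ individually. Instead it:
\begin{itemize}
\item splits off the trivial case $q\le n^2/2$ (where $m$ is forced to be close to $1$ and the claimed bound is nonpositive);
\item for $q>n^2/2$, restricts the second Jensen to the set $\Rcal$ of column pairs with $r_{i,j}\ge 2$, uses $\binom{r_{i,j}}{2}\ge\frac{(r_{i,j}-1)^2}{2}$, and obtains the clean form
\[
s \ \ge\ \frac{\bigl(q-\binom{n}{2}\bigr)^2}{2|\Rcal|}\ \ge\ \frac{(q-n^2/2)^2}{n^2}.
\]
\end{itemize}
Substituting the first-stage bound $q\ge \frac{m^2n^2}{2}-mnk^{1/2}$ into this last expression produces exactly $\frac{n^2(m^2-1)^2}{4}-m(m^2-1)nk^{1/2}$, and the cross term is what gives the stated $m^3nk^{1/2}$. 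The $\Rcal$-restriction and the subtraction of $\binom{n}{2}$ (rather than a naive $-q/2$) are precisely what make the error come out as a single cross term of the right size.

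Your swapped order can in fact be pushed through, but not by the bookkeeping you describe: one has to notice that your main term is $\approx\frac{n^2m^4}{4}$ rather than $\frac{n^2(m^2-1)^2}{4}$, and then use the excess $\frac{n^2(2m^2-1)}{4}\ge\frac{nk m^2}{4}$ (from $n\ge k$) to absorb the $nm^2k$ error. That is a different balancing act than the one you sketch, and it is not the route the paper takes.
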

\begin{proof}
If $k = 1$, then $A$ has at most $n$ $1$'s and so $m=1$ and the claim is trivially satisfied.
So we assume that $k \ge 2$.
We also assume that $A$ has no empty rows. 
If $A$ has empty rows, we remove them and use the claim for the matrix with no empty rows.
Since the removal increases $m$, and does not change $mnk^{1/2}$ and $n$, 
we obtain at least the required number of occurrences of $S$.

We first count the number $q$ of pairs of $1$'s that are in the same row.
Let $d_i$ be the number of $1$'s in the $i$-th row of $A$.
We have 
\[
q = \sum_{\{i: d_i \ge 2 \}} \binom{d_i}{2} \ge \sum_{i=1}^{k} \frac{(d_i-1)^2}{2}. 
\]
Let $d$ be the average number of $1$'s in a row, that is,
\[
d \stackrel{\text{def}}{=} \frac{\sum_{i = 1}^{k} d_i}{k} \ge \frac{mnk^{1/2}}{k} = mn k^{-1/2}.
\]
By the convexity of the function $f(x) = (x-1)^2/2$, we have
\[
q 
\ge k \frac{(d-1)^2}{2}
\ge \frac{k}{2} \left(mnk^{-1/2}-1\right)^2
\ge \frac{(mn-k^{1/2})^2}{2}
\ge \frac{m^2 n^2}{2} - m n k^{1/2}.
\]

Let $r_{i,j}$ be the number of rows that have a $1$-entry in columns $i$ and $j$.
Let $\Rcal$ be the set of pairs $\{i,j\}$ of column indices satisfying $1 \le i < j \le n$ and $r_{i,j} \ge 2$.

First, we consider the case $q \le n^2/2$. 
From the estimate $q \ge (mn-k^{1/2})^2/2$, we obtain $m \le 1+k^{1/2}/n$. 
Therefore $n^2(m^2-1)^2 \le (m+1)^2k \le 4m^2k$ 
and the result holds trivially because $m^2k \le m^3nk^{1/2}$.

Now, we assume $q > n^2/2$, which implies $|\Rcal| > 0$.
By double counting,
\[
q 
= \sum_{1 \le i < j \le n} r_{i,j} 
\le \sum_{\{i,j\}\in \Rcal} r_{i,j} + \binom{n}{2} - |\Rcal|.
\]
Let
\[
r 
\stackrel{\text{def}}{=} \frac{\sum_{\{i,j\}\in \Rcal} r_{i,j}}{|\Rcal|}
\ge \frac{q-\left(\binom{n}{2} - |\Rcal| \right)}{|\Rcal|}
= \frac{q-\binom{n}{2}}{|\Rcal|} + 1.
\]
Let $s$ be the number of occurrences of $S$ in $A$,
that is, $s = \sum_{\{i,j\} \in \Rcal} \binom{r_{i,j}}{2}$.
By the convexity of $f(x) = (x-1)^2/2$ and since $r > 1$, we have
\begin{align*}
s 
& \ge |\Rcal| \frac{(r-1)^2}{2}
\ge \frac{|\Rcal|}{2} \left(\frac{q - \binom{n}{2}}{|\Rcal|}\right)^2 \\
& \ge \frac{\left(m^2 n^2 / 2 - m n k^{1/2} - n^2/2 \right)^2}{2|\Rcal|} \\
& \ge \frac{\left(n^2 (m^2 - 1) / 2 - m n k^{1/2}\right)^2}{n^2} \\
& \ge \frac{n^2(m^2 - 1)^2}{4} - m^3 n k^{1/2}.
\end{align*}

\end{proof}

We first give some definitions and outline the proof of the upper bound in Theorem~\ref{thm:main}
without mentioning precise values used.
We use a modification of a method of Raz~\cite{Raz00}, that was used for proving upper bounds 
in another extremal problem on sets of permutations~\cite{Raz00,CK12}.

A \emph{$k \times n$ word matrix} is a $k \times n$ matrix with exactly one $1$-entry in every row.
A \emph{$k \times n$ word} $u$ is a sequence $u_1, u_2, \dots, u_k$ of $k$ letters from the alphabet $[n]$.
The following is a bijection between the set of $k \times n$ words and the set of $k \times n$ word matrices.
A word $u$ is matched with the matrix $U$ having $1$ on position $(i,j)$ exactly if $u_i = j$.
A set $\Ucal$ of $k \times n$ word matrices is \emph{reverse-free} 
if the set of corresponding words is reverse-free.

Given a set $\Ucal$ of $k \times n$ word matrices,
we let the \emph{overall matrix} $A_{\Ucal}$ be the $k \times n$ matrix
having $1$-entries on those positions where at least one matrix of $\Ucal$ has a $1$-entry.
The basic idea is to design a procedure that shrinks the set $\Ucal$
in order to decrease the number of $1$'s in the overall matrix.
When the overall matrix has few $1$'s, we use a trivial estimate on the size of what remained in $\Ucal$.
By analyzing the procedure, we then deduce that the original size of $\Ucal$ was small.

The shrinking procedure uses the result of Lemma~\ref{l:countS} applied on the overall matrix.
Assume that the overall matrix contains $S$ on the intersection of rows $r_1$ and $r_2$ and columns $c_1$ and $c_2$.
Let an \emph{avoided pair} be a pair of $1$-entries of the overall matrix
that do not appear together in any matrix in $\Ucal$.
By the reverse-free property of $\Ucal$, we know that at least one of the two pairs $\{(r_1,c_1),(r_2,c_2)\}$
and $\{(r_1,c_2),(r_2,c_1)\}$ is avoided.
When the overall matrix contains many occurrences of $S$, we find a $1$-entry $(r,c)$
occurring in many avoided pairs.
If the $1$-entry $(r,c)$ is not present in enough matrices from $\Ucal$,
we remove from $\Ucal$ all the matrices containing $(r,c)$,
thus removing $(r,c)$ from the overall matrix.
Otherwise, we keep only the matrices that contain $(r,c)$,
thus removing all the matrices containing any of the $1$-entries
that appear in some avoided pair together with $(r,c)$.

Given a reverse-free set $\Ucal$ of $k \times n$ word matrices, 
let $A_{\Ucal}$ be the overall matrix of $\Ucal$.
Let the \emph{weight} $|A_{\Ucal}|$ of the overall matrix be the number of its $1$-entries.
The \emph{density} of the overall matrix is $m_{\Ucal} = |A_{\Ucal}|/(nk^{1/2})$.
The $1$-entry of the overall matrix $A_{\Ucal}$ on the position $(r,c)$ is \emph{light} if the number
of matrices $U \in \Ucal$ having $1$ on position $(r,c)$ is at most $|\Ucal|/n$.
Let the \emph{emptiness} $z_{\Ucal}$ of $\Ucal$ be the number of rows of $A_{\Ucal}$ with at most one $1$-entry.

\begin{observation}
\label{obs:light}
Let $\Ucal$ be a reverse-free set of $k \times n$ word matrices such that $A_{\Ucal}$ has a light $1$-entry.
Then the set $\Ucal'$ of word matrices of $\Ucal$ not containing the light $1$-entry satisfies
\begin{align*}
|\Ucal'| & \ge \left(1-\frac{1}{n}\right) |\Ucal|, \\
|A_{\Ucal'}| & \le |A_{\Ucal}|-1 \quad \text{and} \\
z_{\Ucal'} & \ge z_{\Ucal}.
\end{align*}
\end{observation}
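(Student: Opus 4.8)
The plan is to verify the three inequalities one at a time; each follows from an elementary observation about overall matrices, so there is little real obstacle beyond careful bookkeeping. Throughout, write $(r,c)$ for the light $1$-entry of $A_{\Ucal}$, so that $\Ucal'$ consists of exactly those $U \in \Ucal$ having a $0$ on position $(r,c)$. The structural fact I would isolate first is that, since $\Ucal' \subseteq \Ucal$, the overall matrix $A_{\Ucal'}$ is dominated entrywise by $A_{\Ucal}$: a position carries a $1$ in $A_{\Ucal'}$ only if some matrix of $\Ucal' \subseteq \Ucal$ has a $1$ there, hence only if $A_{\Ucal}$ has a $1$ there. This single observation drives the second and third inequalities.

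For the first inequality I would simply count the discarded matrices. By the definition of lightness, at most $|\Ucal|/n$ matrices of $\Ucal$ have a $1$ on position $(r,c)$, and these are precisely the matrices of $\Ucal \setminus \Ucal'$. Hence $|\Ucal'| = |\Ucal| - |\Ucal \setminus \Ucal'| \ge |\Ucal| - |\Ucal|/n = (1 - 1/n)|\Ucal|$. For the second inequality, every $1$-entry of $A_{\Ucal'}$ is a $1$-entry of $A_{\Ucal}$ by the domination above; moreover position $(r,c)$, which carries a $1$ in $A_{\Ucal}$ by the very definition of a light entry, carries a $0$ in $A_{\Ucal'}$ because no matrix of $\Ucal'$ has a $1$ there. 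Thus the $1$-entries of $A_{\Ucal'}$ form a subset of those of $A_{\Ucal}$ that omits at least $(r,c)$, giving $|A_{\Ucal'}| \le |A_{\Ucal}| - 1$. For the third inequality I would apply the domination row by row: for each row $i$, the number of $1$-entries in row $i$ of $A_{\Ucal'}$ is at most the number in row $i$ of $A_{\Ucal}$; consequently, if row $i$ has at most one $1$-entry in $A_{\Ucal}$, it still has at most one in $A_{\Ucal'}$, so the set of rows counted by $z_{\Ucal}$ is contained in the set counted by $z_{\Ucal'}$, whence $z_{\Ucal'} \ge z_{\Ucal}$.

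Since every step is a one-line deduction, I do not expect a genuine obstacle. The only points requiring a moment's care are recording that a light entry is \emph{by definition} a $1$-entry of $A_{\Ucal}$ (which is what makes the ``$-1$'' in the second inequality legitimate), and keeping the direction of the emptiness inequality straight: removing matrices can only make rows emptier, never fuller, so $z$ moves up rather than down.
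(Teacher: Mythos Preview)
Your argument is correct in all three parts. The paper itself offers no proof for this observation---it is stated and used without justification---so your write-up is exactly the sort of routine verification the paper leaves to the reader, and it matches the intended reasoning.
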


Let $n_0$ be a constant such that for every $n \ge n_0$, $k\le n$ and $m \geq 5$,
every matrix $A$ with $mnk^{1/2}$ $1$'s contains $n^2m^4 / 5$ occurrences of $S$.
The existence of $n_0$ follows from Lemma~\ref{l:countS}.

\begin{claim}
\label{cl:nolight}
Let $n \ge n_0$ and let $k \le n$.
Let $\Ucal$ be a reverse-free set of $k \times n$ word matrices 
with $m_{\Ucal} \ge 5$ and such that $A_{\Ucal}$ has no light $1$-entry.
Then there exists a set $\Ucal' \subset \Ucal$ satisfying
\begin{align*}
|\Ucal'| & \ge \frac{|\Ucal|}{n}, \\
|A_{\Ucal'}| & \le |A_{\Ucal}|-\frac{2n m_{\Ucal}^3}{5k^{1/2}} \quad \text{and} \\
z_{\Ucal'} & \ge z_{\Ucal}+1.
\end{align*}
\end{claim}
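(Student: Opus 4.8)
The plan is to carry out a single step of the shrinking procedure described above, in the regime where no light $1$-entry is available, by combining the counting Lemma~\ref{l:countS} with the reverse-free hypothesis and one round of double counting. First I would invoke Lemma~\ref{l:countS} through the constant $n_0$: the overall matrix $A_{\Ucal}$ is a $k \times n$ matrix with $|A_{\Ucal}| = m_{\Ucal} n k^{1/2}$ $1$-entries and $m_{\Ucal} \ge 5$ (and automatically $m_{\Ucal} \le k^{1/2}$, since each row has at most $n$ $1$-entries), so by the choice of $n_0$ it contains at least $n^2 m_{\Ucal}^4 / 5$ occurrences of $S$. For an occurrence sitting on rows $r_1, r_2$ and columns $c_1, c_2$, if neither of the pairs $\{(r_1,c_1),(r_2,c_2)\}$ and $\{(r_1,c_2),(r_2,c_1)\}$ were avoided, then two matrices of $\Ucal$ realizing them would correspond to two distinct words with a reverse on positions $r_1, r_2$; hence at least one of the two pairs is avoided. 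Since an avoided pair obtained this way records the unordered pair of rows and the unordered pair of columns of its occurrence, distinct occurrences of $S$ yield distinct avoided pairs, so $\Ucal$ has at least $n^2 m_{\Ucal}^4 / 5$ avoided pairs.

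Next I would extract a heavily avoided $1$-entry. Summing over all $1$-entries of $A_{\Ucal}$ the number of avoided pairs through that entry counts each avoided pair twice, so the sum is at least $2 n^2 m_{\Ucal}^4 / 5$; dividing by the number $m_{\Ucal} n k^{1/2}$ of $1$-entries, some $1$-entry $(r,c)$ of $A_{\Ucal}$ lies in at least $2 n m_{\Ucal}^3 / (5 k^{1/2})$ avoided pairs. Let $D$ be the set of $1$-entries of $A_{\Ucal}$ that form an avoided pair with $(r,c)$, so $|D| \ge 2 n m_{\Ucal}^3 / (5 k^{1/2}) > 0$. Because $A_{\Ucal}$ has no light $1$-entry, the position $(r,c)$ occurs in more than $|\Ucal|/n$ matrices of $\Ucal$, and I would set $\Ucal' := \{ U \in \Ucal : U_{r,c} = 1 \}$, which at once gives $|\Ucal'| \ge |\Ucal|/n$.

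It then remains to verify the weight and emptiness estimates for this $\Ucal'$. No matrix of $\Ucal'$ can have a $1$-entry at a position of $D$, since it would then contain $(r,c)$ together with a position avoided-paired with $(r,c)$; therefore every position of $D$ is a $0$-entry of $A_{\Ucal'}$, while $(r,c) \notin D$ stays a $1$-entry, so $|A_{\Ucal'}| \le |A_{\Ucal}| - |D| \le |A_{\Ucal}| - 2 n m_{\Ucal}^3 / (5 k^{1/2})$. For the emptiness, deleting $1$-entries cannot decrease the number of rows with at most one $1$-entry, so $z_{\Ucal'} \ge z_{\Ucal}$, and row $r$ supplies the strict gain: every word matrix in $\Ucal'$ has its unique row-$r$ $1$-entry at $(r,c)$, so row $r$ of $A_{\Ucal'}$ contains exactly the single $1$-entry $(r,c)$, whereas row $r$ of $A_{\Ucal}$ has at least two $1$-entries --- for if $(r,c)$ were its only one, then every $U \in \Ucal$ would satisfy $U_{r,c} = 1$, and $(r,c)$ would belong to no avoided pair, contradicting $|D| > 0$. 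The one point I expect to need care is precisely this last one, since the strict increase in emptiness relies on using the ``one $1$ per row'' property of word matrices in both directions: to see that row $r$ becomes near-empty in $A_{\Ucal'}$ and, simultaneously, that it was not near-empty in $A_{\Ucal}$.
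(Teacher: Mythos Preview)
Your proposal is correct and follows essentially the same route as the paper: count occurrences of $S$ via the constant $n_0$, convert each into an avoided pair, average over the $1$-entries of $A_{\Ucal}$ to locate a heavily avoided entry $(r,c)$, and set $\Ucal'$ to be the matrices with a $1$ at $(r,c)$. The only cosmetic difference is in the emptiness argument: the paper observes that $(r,c)$ lies in an occurrence of $S$ (hence row $r$ of $A_{\Ucal}$ has at least two $1$-entries), whereas you argue directly that if $(r,c)$ were the sole $1$-entry in its row then every $U\in\Ucal$ would have $U_{r,c}=1$, forcing $D=\emptyset$; both reach the same conclusion.
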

\begin{proof}
The overall matrix $A_{\Ucal}$ contains $n^2m_{\Ucal}^4 / 5$ occurrences of $S$.
So at least $n^2m_{\Ucal}^4 / 5$ pairs of $1$-entries of $A_{\Ucal}$ are avoided.
Thus there is a $1$-entry of $A_{\Ucal}$ such that the number of avoided pairs containing this $1$-entry is at least
\[
\frac{2n^2m_{\Ucal}^4}{5 |A_{\Ucal}|}
= \frac{2n^2m_{\Ucal}^4}{5 nk^{1/2} m_{\Ucal}}
= \frac{2nm_{\Ucal}^3}{5k^{1/2}}.
\]
Let $(r,c)$ be the position of this $1$-entry.
Let $\Ucal'$ be the set of those matrices from $\Ucal$ that have $1$ at position $(r,c)$.
We consider a position $(r',c')$ such that $\{(r,c), (r',c')\}$ is an avoided pair.
Every matrix $U' \in \Ucal'$ has $0$ at position $(r',c')$.
So also $A_{\Ucal'}$ has $0$ at position $(r',c')$.
Therefore $|A_{\Ucal'}| \le |A_{\Ucal}|-2nm_{\Ucal}^3 / (5k^{1/2})$.
Because $(r,c)$ is not a light $1$-entry, $|\Ucal'|  \ge |\Ucal|/n$.
Since $\Ucal'$ contains only word matrices, the matrix $A_{\Ucal'}$ contains only one $1$-entry in row $r$.
On the other hand, the $1$-entry at position $(r,c)$ is contained 
in at least one occurrence of $S$ in $A_{\Ucal}$,
so $A_{\Ucal}$ contains more than one $1$-entry in row $r$.
Thus $z_{\Ucal'} \ge z_{\Ucal}+1$.
\end{proof}

\begin{claim}
\label{cl:ub}
Let $\Ucal$ be a set of $n \times k$ word matrices, where $n \ge k$. If $\Ucal$ is reverse-free, then
\[
|\Ucal| \le n^k k^{-k/2 + O(k/\log k)}
\]
\end{claim}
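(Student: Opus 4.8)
The plan is to shrink $\Ucal$ by alternately applying Observation~\ref{obs:light} and Claim~\ref{cl:nolight} until its overall matrix becomes sparse, then estimate what remains by a crude product bound; a preliminary step reduces the problem to the case where $n$ is not much larger than $k$.

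\emph{Reduction to $n=k$.} Since $|\Ucal|$ is at most the size of a reverse-free code over $[n]$ of length $k$, it suffices to bound $\overline{F}(n,k)$. Deleting from a reverse-free code over $[n]$ all words that contain a least-frequently-used letter removes at most a $k/n$ fraction of the code, so $\overline{F}(n-1,k)\ge \tfrac{n-k}{n}\,\overline{F}(n,k)$; hence $\overline{F}(n,k)\big/\bigl(n(n-1)\cdots(n-k+1)\bigr)$ is non-increasing in $n$ and so is at most its value $\overline{F}(k,k)/k!$ at $n=k$. Thus $\overline{F}(n,k)\le \overline{F}(k,k)\cdot n^k/k!\le \overline{F}(k,k)\cdot(e/k)^k n^k$, and since $(e/k)^k=k^{-k+O(k/\log k)}$ it is enough to prove $\overline{F}(k,k)\le k^{k/2+O(k/\log k)}$. (The same deletion preserves repetition-freeness, so it handles $F(n,k)$ simultaneously.)

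\emph{The elimination.} Fix a threshold $m^\star\ge5$ to be chosen. From $\Ucal_0=\Ucal$, while $m_{\Ucal_t}\ge m^\star$: apply Observation~\ref{obs:light} if $A_{\Ucal_t}$ has a light $1$-entry, and Claim~\ref{cl:nolight} otherwise (legitimate since $m_{\Ucal_t}\ge5$ and, for $k$ large, $k\ge n_0$; small $k$ are trivial). I track three quantities along the run. The \emph{size} $|\Ucal_t|$ loses a factor at most $(1-1/k)^{-1}$ at a light step and at most $k$ at a no-light step. The \emph{weight} $|A_{\Ucal_t}|$ never increases, drops by at least $2k^{1/2}m_{\Ucal_t}^3/5$ at a no-light step, and starts at most $k^2$, so there are at most $k^2$ light steps in total, contributing a size loss of at most $(1-1/k)^{-k^2}=e^{O(k)}$. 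The \emph{emptiness} never decreases and increases by $1$ at each no-light step. For the number $T$ of no-light steps, put $u_t=m_{\Ucal_t}^{-2}$; the weight drop forces $u_{t+1}\ge u_t+4/(5k)$, and since $u_0\ge 1/k$ and the process stops once $m<m^\star$ this gives $T\le \tfrac{5k}{4}\bigl((m^\star)^{-2}-k^{-1}\bigr)\le 5k/(4(m^\star)^2)$.

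\emph{Endgame and balancing.} On termination $|A_{\Ucal_{\mathrm{final}}}|<m^\star k^{3/2}$, so bounding each row by its number of $1$-entries — a row with a single $1$-entry contributing the factor $1$ — and applying AM--GM to the rest yields $|\Ucal_{\mathrm{final}}|\le (m^\star k^{1/2})^{k}=(m^\star)^k k^{k/2}$. Collecting the three accounts, $\log_2|\Ucal|\le \tfrac k2\log k+k\log m^\star+\tfrac{5k\log k}{4(m^\star)^2}+O(k)$, and I need the two middle terms to be $O(k)$ — the logarithm of a permitted factor $k^{O(k/\log k)}$. This balancing is the crux and the step I expect to be the main obstacle: the naive choice $m^\star\asymp\sqrt{\log k}$ that equates the two terms still leaves an excess of order $k\log\log k$ in the exponent, so one needs a sharper endgame — for instance re-running the whole estimate recursively on the sparse remnant, or performing the no-light steps across a geometric ladder of density scales and using at the finest scales that the remnant is reverse-free, not merely sparse — to drive the density down to a bounded value while keeping $T=O(k/\log k)$ (each no-light step can cost a full factor $k$, so $O(k/\log k)$ of them is exactly what $k^{O(k/\log k)}$ permits). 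With that done, $|\Ucal|\le k^{k/2}\,k^{O(k/\log k)}$, which together with the reduction completes the proof.
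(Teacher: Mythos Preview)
Your light/heavy elimination is exactly the paper's procedure, and your telescoping bound $T\le \tfrac{5k}{4}(m^\star)^{-2}$ is equivalent to what the paper obtains from its geometric ``phase'' decomposition (precisely the ladder of density scales you allude to): with $m^\star=10$ the paper gets $t\le k/6$. The reduction to $n=k$ is correct but unneeded; the paper keeps general $n$ and relies on the cancellation $n^t\cdot n^{k-t}=n^k$ between the cost of the heavy steps and the endgame product. You mention the single-$1$ rows but then discard them in writing $|\Ucal_{\text{final}}|\le(m^\star k^{1/2})^k$ instead of $(m^\star k^{1/2}\cdot k/(k-T))^{k-T}$; that throws away exactly this cancellation, though as you will see it does not save the day anyway.

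Your diagnosis of the crux is spot on, and in fact the paper does not overcome it either. With $m^\star$ bounded the endgame factor is harmless but $T=\Theta(k)$ heavy steps cost a factor $n^{T}$, which after the cancellation still leaves $k^{T/2}$; with $m^\star\asymp\sqrt{\log k}$ the count $T$ is $O(k/\log k)$ but the endgame factor $(m^\star)^k$ is $k^{\Theta(k\log\log k/\log k)}$. The paper's phase argument gives only $t\le k/6$, and its endgame inequality $(10nk^{1/2}/(k-t))^{k-t}\le n^{k-t}(12/k^{1/2})^k$ is false once $k>144$ and $t>0$: since $12k^{-1/2}<1$, raising to the \emph{smaller} exponent $k-t$ gives a larger number, not a smaller one. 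Carried through correctly the paper's computation yields $|\Ucal|\le n^k\,k^{-(k-t)/2+O(k/\log k)}$, which with $t$ as large as $k/6$ is only $n^k k^{-5k/12+O(k/\log k)}$ --- the same $k^{t/2}$ shortfall you isolated. So your hesitation is justified: the gap you flagged is real, and the paper's written proof shares it; getting the full exponent $-k/2$ appears to require an additional idea beyond what either argument supplies.
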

\begin{proof}
We first consider the case that the density $m_{\Ucal}$ of the overall matrix is smaller than $10$.
Since the number of $k \times n$ word matrices contained in $A_{\Ucal}$ is maximized
when each of its rows has the same number of $1$-entries, we obtain
\[
|\Ucal| \le (10nk^{-1/2})^k
\]
and the result holds.

Otherwise, we apply the following procedure on $\Ucal$.
We proceed in several steps.
Let $\Ucal_i \subset \Ucal$ be the set of word matrices before the step $i$.
Let $\Ucal_1 = \Ucal$.
If the overall matrix at the beginning of the step $i$ has a light $1$-entry,
we obtain $\Ucal_{i+1}$ from $\Ucal_{i}$ by applying Observation~\ref{obs:light};
otherwise by applying Claim~\ref{cl:nolight}.
Let $m_i \stackrel{def}{=} m_{\Ucal_i}$ and $A_i \stackrel{def}{=} A_{\Ucal_i}$.
\emph{Light steps} are the steps when Observation~\ref{obs:light} is applied and
\emph{heavy steps} are the remaining ones.
The steps are further grouped into \emph{phases}.
Phase $1$ starts with step $p_1 = 1$.
For every $j \ge 2$, phase $j$ starts with step $p_j$ chosen as the smallest index such that
$m_{p_j} \le m_{p_{j-1}}/2$.
The last phase is the first phase $\ell$ that decreases the density of the overall matrix below $10$.
So at the beginning of the last phase, we have
\[
m_{p_{\ell}} \ge 10.
\]

Because each light step decreases the number of $1$'s in the overall matrix by $1$,
only at most $nk$ light steps are done during the whole procedure.

It remains to count the heavy steps.
At the beginning of a heavy step $i$ of phase $j$, we have
\[
m_i \ge m_{p_{j}}/2.
\]
By Claim~\ref{cl:nolight}, the heavy step decreases the number of $1$-entries in the overall matrix by
\[
|A_{i}| - |A_{i+1}| 
\ge \frac{2nm_{i}^3}{5k^{1/2}} \ge \frac{nm_{p_{j}}^3}{20 k^{1/2}}.
\]
Since the phase $j$ ends at the moment when at least $|A_{p_{j}}|/2$ $1$-entries are removed, 
the number of heavy steps of phase $j$ is at most
\[
\left\lceil \frac{nk^{1/2}m_{p_{j}}/2}{nm_{p_{j}}^3 / (20 k^{1/2})} \right\rceil
= \left\lceil \frac{10k}{m_{p_{j}}^2} \right\rceil.
\]
Each phase shrinks the weight of the overall matrix by a factor of at least $2$,
so $m_{p_{j}} \ge m_{p_{\ell}}2^{\ell-j}$ for every $j \in \{1, \dots, \ell\}$.
We also have for every such $j$
\[
10 \le m_{p_j} \le k^{1/2}.
\]
Let $t$ be the total number of heavy steps. We have
\[
t 
\le \sum_{j=1}^{\ell} \left\lceil\frac{10k}{m_{p_{j}}^2}\right\rceil
\le \sum_{j=1}^{\ell} \frac{11k}{(m_{p_{\ell}}2^{\ell-j})^2}
\le \frac{11k}{m_{p_{\ell}}^2} \sum_{j=0}^{\infty} 2^{-2j}
\le \frac{11k}{m_{p_{\ell}}^2} \cdot \frac{4}{3}
\le \frac{k}{6}.
\]

Let $\Ucal'$ be the set of word matrices after phase $\ell$.
During the whole procedure, at most $nk$ light steps and $t \le k/6$ heavy steps were made.
We have
\begin{equation}
\label{eq:ubpr1}
|\Ucal'|
\ge |\Ucal| \left(1-\frac{1}{n}\right)^{nk} \left(\frac{1}{n}\right)^{t}
\ge |\Ucal| \frac{1}{e^{2k}} n^{-t}.
\end{equation}

The overall matrix $A_{\Ucal'}$ has at most $10 nk^{1/2}$ $1$-entries
and at least $t$ rows with a single $1$-entry.
The number of $k \times n$ word matrices contained in $A_{\Ucal'}$ is maximized
when each of its rows with at least $2$ $1$-entries has the same number of $1$-entries.
Thus,
\begin{equation}
\label{eq:ubpr2}
|\Ucal'| \le \left(\frac{10nk^{1/2}}{k-t} \right)^{k-t} 
\le n^{k-t} \left(\frac{12}{k^{1/2}} \right)^{k} \quad \text{since $t \le k/6$.}
\end{equation}

By combining Equations~\eqref{eq:ubpr1}~and~\eqref{eq:ubpr2}, we conclude that
\[
|\Ucal| 
\le n^{k-t} (12k^{-1/2})^{k} e^{2k} n^t
\le n^k k^{-k/2 + O(k / \log k)}.
\]
\end{proof}

\bibliographystyle{vlastni}

\end{document}